  \newtheorem{theo}{Theorem}[section]		
\newtheorem{propo}[theo]{Proposition}
\theoremstyle{definition}
\newtheorem{defi}{Definition}
\theoremstyle{remark}
\newtheorem{remark}[theo]{Remark}    
\newtheorem{exa}[theo]{Example}
\newcommand{\be}{\begin{equation}}
	\newcommand{\ee}{\end{equation}}
\newcommand{\ben}{\begin{enumerate}}
	\newcommand{\een}{\end{enumerate}}
\newcommand{\beq}{\begin{eqnarray}}
	\newcommand{\eeq}{\end{eqnarray}}
\newcommand{\beqn}{\begin{eqnarray*}}
	\newcommand{\eeqn}{\end{eqnarray*}}
\begin{document}
	\title{On Funk's parabolas}
	\author{Newton Sol\'orzano, Junior Moyses and V\'ictor Le\'on}
	
\address{N. Sol\'orzano. ILACVN - CICN, Universidade Federal da Integração Latino-Americana, Parque tecnológico de Itaipu, Foz do Iguaçu-PR, 85867-970 - Brazil}
	\email{nmayer159@gmail.com}

\address{J. Moyses. ILACVN - CICN, Universidade Federal da Integração Latino-Americana, Parque tecnológico de Itaipu, Foz do Iguaçu-PR, 85867-970 - Brazil}
\email{junior.rmoyses@gmail.com}

\address{V. Le\'on. ILACVN - CICN, Universidade Federal da Integração Latino-Americana, Parque tecnológico de Itaipu, Foz do Iguaçu-PR, 85867-970 - Brazil}
\email{victor.leon@unila.edu.br}

	

	\begin{abstract}
We study parabolas in the two dimensional unit disk equipped with a Funk metric. Four types of parabolas are obtained, due to the non-reversibility of the Funk metric, each one with applications to physics in the Zermelo navigation problem. We show that two of the four parabolas obtained are well known conics, and the remaining two are characterized by irreducible quartics. Explicit examples are given.
	\end{abstract}
	
	\keywords{Finsler metric, Funk metric, Navigation Problem, Funk's Parabolas.}   
	\subjclass[2020]{53B40, 53C60}
	\date{\today}

	\maketitle
\section{Introduction}

Some preliminary work regarding Finsler metrics was carried out in \cite{Chavez2021} where the authors considered a lake in the form of the unit disks $\mathbb{B}^2,$ with a concentric and symmetric wind current given by the vector field $W(x_1,\,x_2)=(-x_1,\,-x_2)$. The distance function (time of displacement) in this framework, is given by 
\begin{align}\label{111}
	d_F(P,Q)=\ln\left(\frac{\sqrt{\langle P, Q-P \rangle^2+(1-\Vert P\Vert^2)\Vert Q-P\Vert^2}-\langle P, Q-P \rangle}{\sqrt{\langle P, Q-P\rangle^2+(1-\Vert P\Vert^2)\Vert Q-P\Vert^2}-\langle Q, Q-P \rangle}
	\right).   
\end{align}
In \cite{Chavez2021} were also obtained equations for the circle, and formulas for the distance from a point to a line and from a line to a point.\par

Some questions naturally arise in this model: Imagine an isle and a coast in the form of a straight line. Suppose a boat parting from the isle to a point in the beach (or the contrary), but it has to stop in some middle point along the way to fill its tank with fuel.\par
Now imagine two boats wanting to meeting in some point at the ocean, one coming from the beach (of the coast in straight line ) and the other one from the isle, both traveling with the same speed and arriving at the same time, then they should meet in a mid point between them.\par
Finally it can be thought of two boats parting from the same point, one arriving to an isle, the other to the beach at the same time.\par
The physics problems presented above motivate the study of an equivalent to parabolas in the Euclidean plane, that we will call Funk's Parabolas (see Definition \ref{def_parabola}). Note that in each problem we must consider the two directions because, due to the wind, there is no symmetry going from one point to another.\par
Once we fix the external force as been concentric and symmetric, the problems aforementioned involve situations that can be solved with the help of Funk's Parabolas.

\section{Preliminary}

In this section some definitions and results, required for the development of our work, are introduced. We adopted the definitions given in \cite{Chavez2021}, which can also be found in \cite{Manfredo1}, such as inner product, norm, regular curve, arc length (which will be called usual or Euclidean) and vector field.


%
%
%
%


\begin{defi}\label{FunkMetric}{\em 
	Let $ x=(x_1,x_2)\in\Omega=\mathbb{B}^2$ and $y=(y_1,y_2)\in\mathbb{R}^2.$ 
	The function 
	\begin{align*}
		F=&\sqrt{a_{11}(x)y_1^2+ 2a_{12}(x)y_1y_2 + a_{22}(x)y^2_2}+b_1(x)y_1+b_2(x)y_2,
	\end{align*}
	where $b_1= \dfrac{x_1}{1-x_1^2-x_2^2}$, $b_2= \dfrac{x_2}{1-x_1^2-x_2^2}$
	and
	\begin{displaymath}
		[a_{ij}]=\frac{1}{(1-x_1^2-x_2^2)^2}\left(\begin{array}{cc}
			{1-x_2^2}  & {x_1x_2} \\
			{x_1x_2} & {1-x_1^2}
		\end{array}\right),
	\end{displaymath}
	it is called \textit{Funk metric on the unit disks} $ \mathbb{B}^2=\{x\in\mathbb{R}^2;\;\Vert x\Vert<1\}$.}
\end{defi}

This metric models a boat sailing with a unitary speed in $\mathbb{B}^2$, where a wind current given by $W_x=(-x_1,-x_2)$ is present with speed lower that $1$. (see Section 3 in \cite{Chavez2021})

The Funk metric on $\mathbb{B}^2, $ is a special case of Randers metrics. Some properties where studied in \cite{Shen2001} for more general cases. 

\begin{defi}[Funk's type Arc Length]{\em		
	Let $c:[a,b]\rightarrow \Omega\subset\mathbb{R}^2$ a piecewise regular curve. The \textit{arc length (Funk's type) of $c$} is defined by
	\[ \mathscr{L}_F(c):=\int_a^bF(c(t),c'(t))dt.\]		
	For any points $p, q \in \Omega \subset \mathbb{R}^2,$ we define the \textit{distance} from $p$ to $q$ induced by $F$, as
	\[ d_F(p,q):=\operatorname{inf}_c\mathscr{L}_F(c),\]
	where the infimum is taken over the set of all piecewise regular curves $c$ such that $c(a)=p$ and $ c(b)=q $.}	
\end{defi}

It is known that the \textit{shortest paths} in this metric are straight lines (see Example 9.2.1 in \cite{Shen2001}), i.e., the optimal path is always a straight line. This property makes this metric manageable in relation to distance calculation. More details about geodesics and its relation with shortest paths can be found in Section 3.2 in \cite{ChernRF2005}, and  Section 2.3 in \cite{Cheng2012}.\par

In Remark 5.1 in \cite{Chavez2021} was proved that $d_F$ given by \eqref{111} is non-reversible ($ d_F(P,Q)\neq d_F(Q,P) $) and it is non invariant by translations, but it is invariant by rotations.

If we consider \begin{align}\label{eq:def_r}
r=&\frac{\sqrt{k}-\langle P,Q-P\rangle}{\sqrt{k}-\langle Q,Q-P\rangle},
\end{align} where
\[ k = \langle P, Q-P \rangle^2+(1-\Vert P\Vert^2)\Vert Q-P\Vert^2,\]
the equation \eqref{111} can be more manageable using the following version of Theorem 5.1 in \cite{Chavez2021}. 
\begin{theo}\label{maintheorem}{\em 
	Let $P,\;Q$ be points in $\mathbb{B}^2$ and $r\geq1$ a real number, then: 
	\begin{align}\label{13}
		\left\Vert \dfrac{P}{r}-Q\right\Vert&=\dfrac{r-1}{r}
	\end{align}
	it is equivalent to
	$d_F (P, Q) = \ln r.$
}
\end{theo}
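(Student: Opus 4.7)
The plan is to convert both sides of the claimed equivalence into a common quadratic equation in $r$. By the distance formula \eqref{111}, the identity $d_F(P,Q) = \ln r$ is literally the statement that $r$ equals the quantity defined in \eqref{eq:def_r}. So the task reduces to showing that, for $r \geq 1$, the equation $\Vert P/r - Q\Vert = (r-1)/r$ picks out exactly the same value of $r$ as \eqref{eq:def_r}.

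I would first square $\Vert P/r - Q\Vert = (r-1)/r$ and multiply through by $r^2$. Expanding $\Vert P - rQ\Vert^2$ and collecting terms in $r$ yields the quadratic
\[ (1 - \Vert Q\Vert^2)\, r^2 - 2(1 - \langle P,Q\rangle)\, r + (1 - \Vert P\Vert^2) = 0. \]
A direct expansion using $\langle P, Q-P\rangle = \langle P,Q\rangle - \Vert P\Vert^2$ and $\Vert Q-P\Vert^2 = \Vert P\Vert^2 - 2\langle P,Q\rangle + \Vert Q\Vert^2$ shows that the discriminant of this quadratic is exactly $4k$, so the two roots are
\[ r_{\pm} = \frac{(1 - \langle P,Q\rangle) \pm \sqrt{k}}{1 - \Vert Q\Vert^2}. \]

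To identify $r_{+}$ with the expression in \eqref{eq:def_r}, I would rationalize the ratio $(\sqrt{k} - \langle P,Q-P\rangle)/(\sqrt{k} - \langle Q,Q-P\rangle)$ by multiplying numerator and denominator by $\sqrt{k} + \langle Q,Q-P\rangle$, using the auxiliary identity $k - \langle Q,Q-P\rangle^2 = (1 - \Vert Q\Vert^2)\Vert Q-P\Vert^2$, which falls out of the same expansion. To handle the constraint $r \geq 1$, evaluating $f(r) := (1 - \Vert Q\Vert^2) r^2 - 2(1 - \langle P,Q\rangle) r + (1 - \Vert P\Vert^2)$ at $r = 1$ gives $f(1) = -\Vert P - Q\Vert^2 \leq 0$; since the leading coefficient $1 - \Vert Q\Vert^2$ is positive, the parabola has exactly one root in $[1,\infty)$ (collapsing to $1$ when $P = Q$), necessarily $r_{+}$. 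Hence an $r \geq 1$ satisfies $\Vert P/r - Q\Vert = (r-1)/r$ iff $r = r_{+}$ iff $d_F(P,Q) = \ln r$. The main obstacle I anticipate is the algebraic bookkeeping around the identity $k - \langle Q,Q-P\rangle^2 = (1 - \Vert Q\Vert^2)\Vert Q-P\Vert^2$; once this is secured, the equivalence follows by assembling the pieces above.
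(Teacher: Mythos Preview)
Your argument is correct. Squaring and clearing $r^2$ gives the quadratic $(1-\Vert Q\Vert^2)r^2-2(1-\langle P,Q\rangle)r+(1-\Vert P\Vert^2)=0$, whose discriminant is indeed $4k$; the identity $k-\langle Q,Q-P\rangle^2=(1-\Vert Q\Vert^2)\Vert Q-P\Vert^2$ is obtained by the same expansion you use for $k$, and together with $k-\langle P,Q-P\rangle\langle Q,Q-P\rangle=(1-\langle P,Q\rangle)\Vert Q-P\Vert^2$ it rationalizes \eqref{eq:def_r} to $r_+$. The sign analysis via $f(1)=-\Vert P-Q\Vert^2\le 0$ and $1-\Vert Q\Vert^2>0$ pins down the unique root $\ge 1$.

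As for the comparison: the paper does not give its own proof of this statement. It presents Theorem~\ref{maintheorem} as a reformulation of Theorem~5.1 of \cite{Chavez2021} and refers the reader there for the argument. Your write-up therefore supplies what the paper omits; the only suggestion is to record explicitly the companion identity $k-\langle P,Q-P\rangle\langle Q,Q-P\rangle=(1-\langle P,Q\rangle)\Vert Q-P\Vert^2$ alongside the one you flagged, since both are needed in the rationalization step.
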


It is worth mentioning that the previous theorem can be easily generalized for points $P$ and $Q$ in $\mathbb{R}^n$ at the unit ball (see the proof of the theorem in \cite{Chavez2021}).
In order to facilitate calculations we define the function $ \rho:[0,1)\times [0,1)\to \mathbb{R} $ as,
\begin{align}\label{rxy}
\rho(\epsilon, \eta)=\frac{1+\operatorname{sgn}(\epsilon-\eta)\epsilon}{1+\operatorname{sgn}(\epsilon-\eta)\eta}
\end{align}
where $\operatorname{sgn}$ is the function sign:
\[
\operatorname{sgn}(x) = \left\{
\begin{array}{rll} 
1, & \hbox{if} & x>0\\
0, & \hbox{if} & x=0\\
-1, & \hbox{if} & x<0.
\end{array}
\right.\]
The following formulas can be found in Section 5.2 e 5.3 in \cite{Chavez2021}, and play a fundamental role in the present work:

\begin{propo}[\cite{Chavez2021}]\label{dist_ponto_reta_const}{\em 
Let $P=(a,b)$ be a point and $s:\;x_2=c$ a constant line. The Funk distance $d_F(P,s)$ from the point $P$ to the line $s$ and Funk distance $d_F(s,P)$ from the line $s$ to the point $P,$ are, respectively 
\begin{align}
	d_{F}(P,s)=&d_F\left(P,\left(\frac{a}{\rho(b,c)},c\right)\right)=\ln\left(\rho(b,c)\right),\label{dist_pt_reta_const}\\
	d_{F}(s,P)=&d_F\left(\left(a\cdot \rho(c,b),c\right),P\right)=\ln\left(\rho(c,b)\right),\label{dist_reta_const_pt}
\end{align}
where $\rho(\epsilon,\eta)$ is given by \eqref{rxy}.
}\end{propo}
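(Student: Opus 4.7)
The strategy is to pass through Theorem \ref{maintheorem}, which gives a geometric description of the Funk balls as ordinary Euclidean disks with shifted centers, and then use this to minimize the distance along the horizontal line $s$. Concretely, for a fixed base point $P$, the condition $d_F(P,Q)=\ln r$ is equivalent to $Q$ lying on the Euclidean circle of center $P/r$ and radius $(r-1)/r$. Thus the forward distance $d_F(P,s)=\inf_{Q\in s}d_F(P,Q)$ equals $\ln r^{*}$, where $r^{*}\ge 1$ is the smallest value of $r$ for which this Euclidean circle meets the horizontal line $x_2=c$.

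For the first formula, I would write the tangency/intersection condition as
\[
\left|\frac{b}{r}-c\right|\le \frac{r-1}{r}, \qquad\text{i.e.,}\qquad |b-cr|\le r-1,
\]
and solve for $r\ge 1$. Splitting the absolute value yields the two linear inequalities $r(1+c)\ge 1+b$ and $r(1-c)\ge 1-b$, hence
\[
r\ge\max\!\left\{\frac{1+b}{1+c},\;\frac{1-b}{1-c}\right\}.
\]
A short case distinction on the sign of $b-c$ identifies the active bound as exactly $\rho(b,c)$, recovering $d_F(P,s)=\ln\rho(b,c)$. At the optimal $r^{*}=\rho(b,c)$ the Euclidean circle is tangent to $s$ from above or below, so the tangency point has the same horizontal coordinate as the center $P/r^{*}$, giving $Q^{*}=(a/\rho(b,c),c)$, which proves \eqref{dist_pt_reta_const}.

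For the second formula, the roles of base point and target are swapped: now $d_F(Q,P)=\ln r'$ is equivalent to $\|Q/r'-P\|=(r'-1)/r'$. With $Q=(t,c)\in s$, the existence of a real $t$ solving $(t/r'-a)^{2}=(r'-1)^{2}/(r')^{2}-(c/r'-b)^{2}$ forces $|c-br'|\le r'-1$, and the same elementary argument as before yields the minimum value $r'=\rho(c,b)$. At this value the right-hand side vanishes, so $t^{*}=a\rho(c,b)$, giving \eqref{dist_reta_const_pt}.

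The main obstacle is making the case analysis transparent: one has to check carefully that the formula $\rho$, which is defined piecewise by $\operatorname{sgn}(b-c)$ (resp.\ $\operatorname{sgn}(c-b)$), really coincides with the \emph{maximum} of the two candidate lower bounds and that this maximum is always $\ge 1$ (so that Theorem \ref{maintheorem} applies), with equality exactly when $P\in s$. Once this bookkeeping is done, the explicit identification of the foot of the perpendicular—i.e., the unique tangency point $Q^{*}$—is immediate, and formulas \eqref{dist_pt_reta_const} and \eqref{dist_reta_const_pt} follow at once from Theorem \ref{maintheorem} applied at $r=r^{*}$ and $r=r'$ respectively.
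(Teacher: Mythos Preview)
The paper does not prove this proposition; it is quoted verbatim from \cite{Chavez2021} (see the sentence just before the statement: ``The following formulas can be found in Section 5.2 e 5.3 in \cite{Chavez2021}''). So there is no in-paper argument to compare against.

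Your approach via Theorem \ref{maintheorem} is correct and is in fact the natural one: the forward Funk sphere of radius $\ln r$ about $P$ is the Euclidean circle of center $P/r$ and radius $(r-1)/r$, and minimizing $d_F(P,\cdot)$ over the horizontal line amounts to finding the first $r$ for which that circle touches $x_2=c$. Your computation $|b-cr|\le r-1\iff r\ge\max\{(1+b)/(1+c),(1-b)/(1-c)\}$ and the identification of the maximum with $\rho(b,c)$ are right, and the tangency point $(a/\rho(b,c),c)$ drops out exactly as you say. Since the forward Funk sphere about $P$ is always contained in $\mathbb{B}^2$ (its farthest point from the origin has norm $(\|P\|+r-1)/r<1$), the foot $Q^*$ automatically lies in the disk, so \eqref{dist_pt_reta_const} is fully justified.

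One small caveat on \eqref{dist_reta_const_pt}: your argument produces the candidate foot $Q^{*}=(a\rho(c,b),c)$, but nothing in the computation guarantees $Q^{*}\in\mathbb{B}^2$; for instance $P=(0.9,0)$, $c=0.3$ gives $Q^{*}=(1.17,0.3)\notin\mathbb{B}^2$. The value $\ln\rho(c,b)$ is still the correct infimum of $d_F(Q,P)$ over $Q$ on the chord $s\cap\mathbb{B}^2$ (the function is monotone as $t$ moves toward the endpoints), but the displayed equality $d_F(s,P)=d_F((a\rho(c,b),c),P)$ then has to be read as a limit rather than an attained minimum. This is a feature of the statement as imported from \cite{Chavez2021}, not a defect of your method.
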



\section{Funk's Parabolas in $\mathbb{B}^2$}

As the Funk distance is non-reversible, we obtain for types of parabolas.

\begin{defi} \label{def_parabola}{\rm 

Let $s \subset \mathbb{B}^2$ be a fixed line, called \textit{the directrix}, and $\mathcal{F}$ a point in $\mathbb{B}^2 \subset \mathbb{R}^2$, called \textit{the focus}. We define a \textit{Funk's Parabola}, with directrix $s$ and focus $\mathcal{F}$, as the set of points $P \in \mathbb{B}^2$ that satisfies one of the following properties:
\begin{multicols}{2}
\begin{enumerate}
	\item[(P1)] $d_F(\mathcal{F},P)=d_F(s,P);$
	\item[(P2)] $d_F(P,\mathcal{F})=d_F(P,s);$
	\item[(P3)] $d_F(\mathcal{F},P)=d_F(P,s);$
	\item[(P4)] $d_F(s,P)=d_F(P,\mathcal{F}).$
\end{enumerate}
\end{multicols}
}
\end{defi}
The set of point satisfying (P$k$) is called {\bf Funk's parabola of type {\boldmath$k$}} with $k=1,2,3,4$.

\begin{remark} Note that in the previous definition we are also considering the degenerate case, i.e., when the focus $\mathcal{F}$          belongs to the directrix $s$.
\end{remark}
Due to the invariance of the Funk distance by rotations, we will consider first {\bf the case where the directrix is parallel to the  {\boldmath$x$} axis}.\par 
Throughout this work we will use the following notations 
\[ s: y=y_0,\quad \mathcal{F}=(f_0,\,g_0)\;\text{ and }\;P=(x,\,y). \]    
\subsubsection{Funk's Parabola type 1 - Particular Case}  
By Theorem \ref{maintheorem} and \eqref{dist_reta_const_pt} we have that $P=(x,\,y) $ satisfies $P1$ if, and only if, \begin{align}\label{eq:tipe1norm}
\displaystyle\left\Vert \frac{\mathcal{F}}{\rho(y_0,y)}-P\right\Vert^2 = \left\Vert \frac{Q}{\rho(y_0,y)}-P \right\Vert^2,
\end{align}
where $ Q=\left(x.\rho(y_0,y),y_0\right)$ and $\rho$ is given by \eqref{rxy}. 

First, suppose the degenerate case, when the focus $\mathcal{F}=(f_0,g_0)$ is in the line $s=y_0$, i.e., when $y_0=g_0$. Therefore the equation \eqref{eq:tipe1norm} is equivalent to \[x-f_0\cdot (\rho(y_0,y))^{-1}=0.\]
This implies that $P=(x,\,y)$, is either in the line segment $ (1-y_0)x-f_0(1-y)=0, $ for $ y\geq y_0, $ or in the line segment $(1+y_0)x-f_0(1+y)=0, $ for $ y<y_0$. If $f_0=0$, then $ P $ is in the line $x=0$. If $f_0\neq 0$ and $ x $ goes to zero, thus $ y $ approximates to $1$ or $-1$ independently of $f_0$ value.\par 
In what follows we characterize non degenerate Funk's Parabola of type 1.
\begin{propo}
Let   $s: y=y_0 \subset \mathbb{B}^2$ be the directrix, $\mathcal{F}=(f_0,g_0) \in \mathbb{B}^2$ be the focus with $ g_0\neq y_0$ and $ \mathcal{E} \subset \mathbb{R}^2$ an Euclidean ellipse described by
\begin{align}\label{p1}
x^2 +B{x}\overline{y} +C{\overline{y}^2} +E{\overline{y}} = 0,
\end{align}
where 
\begin{align}
\overline{y}=& 1+\operatorname{sgn}(y_0-g_0)y\\
B=&-\frac{2f_0}{\sigma_1},\label{defB}\\
C=&\frac{f_0^2 + (y_0-g_0)^2+2\vert y_0-g_0\vert}{\sigma_1^2},\label{defC}\\
E=&-\frac{2\vert y_0-g_0\vert}{\sigma_1},\label{defE}
\end{align}
and 
\begin{align}\label{defsigma}
\sigma_1= 1+\operatorname{sgn}(y_0-g_0)y_0.
\end{align}
Hence the locus of points of the Funk's Parabola of type 1 is given by
\[ \mathcal{E} - \{(0,-\operatorname{sgn}(y_0-g_0))\}. \] 
\end{propo}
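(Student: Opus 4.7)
The plan is to translate (P1) into an algebraic identity using Theorem \ref{maintheorem} together with \eqref{dist_reta_const_pt}, and then to recognize the resulting curve as the stated ellipse. Since $Q=(x\rho(y_0,y),y_0)$ implies $Q/r-P=(0,\,y_0/r-y)$, the right-hand side of \eqref{eq:tipe1norm} collapses to $(y_0/r-y)^2$; expanding $(f_0/r-x)^2+(g_0/r-y)^2=(y_0/r-y)^2$ and multiplying by $r^2$ produces
\begin{equation*}
x^2 r^2 - 2f_0\, x\, r + 2(y_0-g_0)\,y\,r + f_0^2+g_0^2-y_0^2 = 0. \qquad (\star)
\end{equation*}

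To pin down the branch of $r=\rho(y_0,y)$, I would first show that every $P$ satisfying (P1) lies on the same side of $s$ as $\mathcal{F}$, so $\operatorname{sgn}(y_0-y)=\operatorname{sgn}(y_0-g_0)=:\varepsilon$. Indeed, if $\mathcal{F}$ and $P$ were separated by $s$, the straight-line Funk-geodesic from $\mathcal{F}$ to $P$ would cross $s$ at some interior point $M$, and additivity of Funk arclength along geodesics would yield
\begin{equation*}
d_F(\mathcal{F},P)=d_F(\mathcal{F},M)+d_F(M,P)>d_F(M,P)\geq d_F(s,P),
\end{equation*}
contradicting (P1). With $\varepsilon$ fixed, \eqref{rxy} gives $r=\sigma_1/\overline{y}$; substituting into $(\star)$, multiplying by $\overline{y}^2/\sigma_1^2$, rewriting $y=\varepsilon(\overline{y}-1)$ and using $\varepsilon(y_0-g_0)=|y_0-g_0|$, a routine collection of the coefficients of $x^2$, $x\overline{y}$, $\overline{y}^2$ and $\overline{y}$ produces \eqref{p1} with $B,C,E$ as in \eqref{defB}--\eqref{defE}. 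The algebraic step carrying the work is the identity $(f_0^2+g_0^2-y_0^2)+2|y_0-g_0|\sigma_1=f_0^2+(y_0-g_0)^2+2|y_0-g_0|$, which turns the coefficient of $\overline{y}^2$ into $C$.

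Finally, I would verify the two geometric claims. The discriminant computation $B^2-4C=-4[(y_0-g_0)^2+2|y_0-g_0|]/\sigma_1^2$ is negative for $y_0\neq g_0$, so \eqref{p1} defines an ellipse $\mathcal{E}\subset\mathbb{R}^2$. To locate $\mathcal{E}\cap\partial\mathbb{B}^2$, I would substitute for $x^2$ from \eqref{p1} into $x^2+y^2-1$ and use $y^2=(\overline{y}-1)^2$ to obtain the factorization
\begin{equation*}
x^2+y^2-1=-\overline{y}\bigl[Bx+(C-1)\overline{y}+E+2\bigr]\quad\text{on }\mathcal{E};
\end{equation*}
the factor $\overline{y}=0$ yields only the boundary point $(0,-\varepsilon)$, and a direct computation with the explicit $B,C,E$ should show that the auxiliary line $Bx+(C-1)\overline{y}+E+2=0$ is disjoint from $\overline{\mathbb{B}^2}$. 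Hence $\mathcal{E}\cap\partial\mathbb{B}^2=\{(0,-\operatorname{sgn}(y_0-g_0))\}$, and this single point must be removed from $\mathcal{E}$ since the Funk metric lives only on the open disk. Verifying this disjointness is the main obstacle, as it relies on manipulation of the specific coefficients rather than on any general conic principle.
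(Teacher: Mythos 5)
Your derivation of \eqref{p1} coincides with the paper's own proof in all essentials: the same triangle--inequality argument (a point $Q_0\in s$ on the segment $\overline{\mathcal{F}P}$ gives $d_F(s,P)\le d_F(Q_0,P)<d_F(\mathcal{F},Q_0)+d_F(Q_0,P)=d_F(\mathcal{F},P)$) pins $P$ to the side of $s$ containing $\mathcal{F}$; the same expansion of \eqref{eq:tipe1norm} yields your $(\star)$ (the paper's \eqref{p0}); and the same identity $f_0^2+g_0^2-y_0^2+2|y_0-g_0|\sigma_1=f_0^2+(y_0-g_0)^2+2|y_0-g_0|$ produces the coefficient $C$. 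You diverge only in the endgame. The paper finishes by explicitly reducing \eqref{p1} to canonical form (a translation when $f_0=0$, a rotation plus translation when $f_0\neq 0$), which exhibits the center and vertices and in particular certifies that the elliptic-type conic is a genuine ellipse rather than a point or the empty set --- a degeneracy your ``negative discriminant, hence ellipse'' step passes over (it is excluded because $(0,-\varepsilon)$ lies on the curve and the locus is nonempty, but this deserves a word). In the other direction, your factorization $x^2+y^2-1=-\overline{y}\bigl[Bx+(C-1)\overline{y}+E+2\bigr]$ on $\mathcal{E}$ proves something the paper only gestures at, namely that $(0,-\operatorname{sgn}(y_0-g_0))$ is the \emph{only} boundary point of $\mathcal{E}$, so that the connected set $\mathcal{E}\setminus\{(0,-\varepsilon)\}$, being disjoint from $\partial\mathbb{B}^2$, lies entirely in $\mathbb{B}^2$. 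The disjointness you flag as the main obstacle does in fact hold and is not hard: writing $\varepsilon=\operatorname{sgn}(y_0-g_0)$, the line misses $\overline{\mathbb{B}^2}$ if and only if $(C+E+1)^2>B^2+(C-1)^2$, i.e.\ $(2C+E)(E+2)>B^2$; since $E+2=2(1+\varepsilon g_0)/\sigma_1$ and $2C+E=2\bigl[f_0^2+(y_0-g_0)^2+|y_0-g_0|(1-\varepsilon y_0)\bigr]/\sigma_1^2$ with $\sigma_1>0$, clearing $\sigma_1^3$ reduces the inequality to $|y_0-g_0|\,(1-f_0^2-g_0^2)>0$, which is exactly the hypotheses $g_0\neq y_0$ and $\mathcal{F}\in\mathbb{B}^2$. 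So your route is sound and, on this last point, more complete than the paper's.
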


\begin{proof}
The line $s$ divides $\mathbb{B}^2$ in two domains. Suppose $\mathcal{F}\notin s$. We will show that the Funk's Parabola of type 1 and the focus $\mathcal{F}$ are in the same subspace generated by $s$. In fact, suppose that $\mathcal{F}$ and $P$ are in different subspaces generated by $s$.  Let $Q$ and $Q_0$ be in $s$, such that $d_F(s,P)=d_F(Q,P) $ and $ Q_0$ is in the line segment $ \overline{\mathcal{F}P}, $ then
\[d(s,P)=d(Q,P)\leq d(Q_0,P)<d(\mathcal{F},Q_0)+ d(Q_0,P) = d(\mathcal{F},P).\]       
Suppose that there is a point $P=(x,\,y)$ in the Funk's parabola of type 1 in the line $s$, i.e., there is a $y$ such that $y=y_0$, hence by (P1) we have that $ \mathcal{F}=P$.
Therefore, the sign of $ y_0-y $ is equal to the sign of $ y_0-g_0 $, thus we have that $\rho(y_0,y)$ can be written as 
\[\rho(y_0,y)=\frac{1+\operatorname{sgn}(y_0-g_0)y_0}{1+\operatorname{sgn}(y_0-g_0)y}.\]            
Using the properties of absolute value and inner product, equation \eqref{eq:tipe1norm} can be written as
\begin{align}\label{p0}
2\rho(y_0,y)\langle P, \mathcal{F}-Q\rangle + \Vert Q\Vert^{2} - \Vert \mathcal{F}\Vert^{2} = 0.
\end{align}
Due to the fact that $ \operatorname{sgn}(y_0-g_0)\cdot(y_0-g_0)=\vert y_0-g_0\vert $, and replacing $ \mathcal{F}, P$ and $ Q$, we have that equation \eqref{p0} is equivalent to:
\begin{align*}
x^2 -2f_0\frac{x}{\rho(y_0,y)} +\left[f_0^2 + (y_0-g_0)^2+2\vert y_0-g_0\vert\right]\frac{1}{\rho^2(y_0,y)} - 2\vert y_0-g_0\vert\frac{1}{\rho(y_0,y)} = 0,
\end{align*}
which is just the equation \eqref{p1}.

Equation \eqref{p1} is a second-degree equation in $x$ and $1+\operatorname{sgn}(y_0-g_0)y$, thus in $x$ and $y$, and whose discriminant $I$ is given by
\[I=B^2-4C=-\frac{4}{\sigma_1^2}[(y_0-g_0)^2+2\vert y_0-g_0\vert]<0 \]
what show us that \eqref{p1} is elliptical in Euclidean geometry (i.e., the empty set, an ellipse, a point or a circle in $\mathbb{R}^2 $).\par
Note that the point $(0,-\operatorname{sgn}(y_0-g_0))$ satisfies the equation \eqref{p1}. Then, the locus of \eqref{p1} can not be a closed curve. We will see that in fact it is an Euclidean ellipse without the point $(0,-\operatorname{sgn}(y_0-g_0))$.\par
Techniques for determine the conic from a given general equation of second degree can be found in \cite{EdwardsPenney1990,Leithold1976}.\\ 
{\bf Case 1:} {\boldmath$f_0=0$.}
When $f_0=0$, then the change of variables 
\begin{align}
x=&\overline{x},\label{xf0}\\
y=&\left(y-\frac{E}{2C}-1\right)\operatorname{sgn}(y_0-g_0)\label{yf0}
\end{align}
where $\frac{E}{2C} = \frac{\sigma}{2+\vert y_0-g_0\vert}$, transforms the equation \eqref{p1} in the canonical form:
\begin{align}\label{canonicaf0}
\frac{\overline{x}^2}{\frac{E^2}{4C}} + \frac{\overline{y}^2}{\frac{E^2}{4C^2}}=1.
\end{align}
From \eqref{xf0},  \eqref{yf0} and \eqref{canonicaf0}, we have that the Funk's parabola of type 1 is a section of an Euclidean ellipse with center $ \mathcal{C}=\left(0, \frac{g_0-\operatorname{sgn}(y_0-g_0)}{2+\vert y_0-g_0\vert}\right) $ and vertices: 
$ (\pm \frac{E}{2\sqrt{C}},-(1+\frac{E}{2C})\operatorname{sgn}(y_0-g_0)), (0, -(1+\frac{E}{2C}\pm\frac{E}{2C})).$\\
{\bf Case 2:} {\boldmath$f_0\neq 0$}. In \eqref{p1}, consider the change of variable 
\begin{align}
x=&\alpha \overline{x} - \beta \overline{y}-\left(\alpha\frac{D}{2A} + \beta\frac{E}{2C}\right),\label{eq:xf0neq0}\\
1+\operatorname{sgn}(y_0-g_0)y=&\beta \overline{x} + \alpha\overline{y} - \left(\beta\frac{D}{2A}+\alpha\frac{E}{2C}\right),
\end{align} where $ \alpha=\frac{1}{\sqrt{2}}\sqrt{1-\operatorname{sgn}(f)\frac{(1-C)}{\sqrt{B^2+(1-C)^2}}}, $ $ \beta=\frac{1}{\sqrt{2}}\sqrt{1+\operatorname{sgn}(f)\frac{(1-C)}{\sqrt{B^2+(1-C)^2}}}$ we have that equation \eqref{p1} is reduced to the canonical form:
\begin{align}
\frac{\overline{x}^2}{\frac{1}{4}\left(\frac{\overline{D}^2}{\overline{A}^2} + \frac{\overline{E}^2}{\overline{A}\overline{C}}\right)} + \frac{\overline{y}^2}{\frac{1}{4}\left(\frac{\overline{D}^2}{\overline{A}\overline{C}} + \frac{\overline{E}^2}{\overline{C}^2}\right)}=1,
\end{align}
where
\begin{align}
\overline{A}=&\alpha^2 + B\alpha\beta + C\beta^2,\\
\overline{C}=&\beta^2-B\alpha\beta + C\alpha^2,\\
\overline{D}=&E\beta,\\
\overline{E}=&E\alpha.\label{eq:overE}
\end{align}
Note that $ \overline{A}=\left(\alpha + \frac{\beta}{2}B\right)^2  + \frac{(y_0-g_0)^2+2\vert y_0-g_0\vert}{\sigma_1^2}>0.$ Analogously, we can show that $ \overline{C}>0$.
From \eqref{eq:xf0neq0}-\eqref{eq:overE}, we have that the Funk's parabola of type 1 is a section of an Euclidean ellipse with center in $ \mathcal{C}=-\left(\alpha\frac{D}{2A} + \beta\frac{E}{2C}, \operatorname{sgn}(y_0-g_0)\left(1+ \beta\frac{D}{2A}+\alpha\frac{E}{2C}\right)\right) $ and vertices
$ \mathcal{C}\pm \frac{1}{2}\sqrt{\frac{\overline{D}^2}{\overline{A}^2}+\frac{\overline{E}^2}{\overline{A}\overline{C}}}\left(\alpha,\beta\operatorname{sgn}(y_0-g_0)\right),  $ 
$ \mathcal{C}\pm \frac{1}{2}\sqrt{\frac{\overline{D}^2}{\overline{A}\overline{C}}+\frac{\overline{E}^2}{\overline{C}^2}}\left(-\beta,\alpha\operatorname{sgn}(y_0-g_0)\right). $
\end{proof}  
\subsubsection{Funk's Parabola type 2 - Particular Case}
By Theorem \ref{maintheorem} and \eqref{dist_pt_reta_const} we have that the point $P=(x,\,y)$ satisfies (P2) if, and only if, \begin{align}\label{eq:tipe2norm}
\displaystyle\left\Vert \frac{\mathcal{F}}{\rho^{-1}(y,y_0)}-P\right\Vert^2 = \left\Vert \frac{Q}{\rho^{-1}(y,y_0)}-P \right\Vert^2,
\end{align}
where $ Q=\left(x.\rho^{-1}(y,y_0),y_0\right)$ and $\rho $ is given by \eqref{rxy}.\par
The development of equations \eqref{eq:tipe1norm} and \eqref{eq:tipe2norm} are analogous. Hence we have, first, the degenerate case of the Funk's parabola of type 2 $(y_0=g_0)$, is characterized by the line
\[x-f_0\rho(y,y_0)=0,\] which is just a reflection of a Funk's parabola of type 1 (degenerate) in relation to the line $s$.\par
Second, the non-degenerate Funk's parabolas of type 2 are characterized by the following equation.
\begin{propo}\label{prop1}
Let $s: y=y_0 \subset \mathbb{B}^2$ be a directrix, $\mathcal{F}=(f_0,g_0) \in \mathbb{B}^2$ the focus with $g_0\neq y_0$ and $ \mathcal{H} \subset \mathbb{R}^2$ an Euclidean hyperbola characterized by the equation.
\begin{align}\label{p2}
x^2 +B{x}{\overline{y}} +C{\overline{y}^2} +E{\overline{y}} = 0,
\end{align}
where 
\begin{align}
\overline{y}=&1-\operatorname{sgn}(y_0-g_0)y\\
B=&-\frac{2f_0}{\sigma_2},\label{defB2}\\
C=&\frac{f_0^2 + (y_0-g_0)^2-2\vert y_0-g_0\vert}{\sigma_2^2},\label{defC2}\\
E=&\frac{2\vert y_0-g_0\vert}{\sigma_2},\label{defE2}
\end{align}
and 
\begin{align}\label{defsigma2}
\sigma_2= 1-\operatorname{sgn}(y_0-g_0)y_0.
\end{align}
Then the locus of the Funk's parabola of type 2 is the convex curve \[ \mathcal{H} \cap \mathbb{B}^2. \]
\end{propo}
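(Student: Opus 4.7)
The plan is to mirror closely the proof of the preceding proposition (the type 1 parabola), the only structural change coming from replacing \eqref{dist_reta_const_pt} by \eqref{dist_pt_reta_const}. By Theorem \ref{maintheorem}, condition (P2) is equivalent to the norm equality \eqref{eq:tipe2norm}. I would first repeat verbatim the triangle-inequality argument used in the type 1 proof to conclude that any $P$ satisfying (P2) lies in the same open half-plane bounded by $s$ as the focus $\mathcal{F}$; otherwise the straight Funk geodesic from $P$ to $\mathcal{F}$ would cross $s$ at some $Q_0$ and yield the contradiction $d_F(P,\mathcal{F}) > d_F(P,s)$.

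With the half-plane fixed we have $\operatorname{sgn}(y-y_0) = -\operatorname{sgn}(y_0-g_0)$, so
\[ \rho(y,y_0) = \frac{1 - \operatorname{sgn}(y_0-g_0)y}{1 - \operatorname{sgn}(y_0-g_0)y_0} = \frac{\overline{y}}{\sigma_2}. \]
Expanding \eqref{eq:tipe2norm} into the analogue of \eqref{p0},
\[ 2\rho^{-1}(y,y_0)\langle P, \mathcal{F} - Q\rangle + \|Q\|^2 - \|\mathcal{F}\|^2 = 0, \]
then substituting $\mathcal{F}=(f_0,g_0)$, $Q=(x\sigma_2/\overline{y}, y_0)$, and using the identity $y=\operatorname{sgn}(y_0-g_0)(1-\overline{y})$, delivers exactly \eqref{p2} after clearing by $\overline{y}^2/\sigma_2^2$. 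The only sign differences from the type 1 calculation are the minus in $\overline{y}$ and the replacement $+2|y_0-g_0| \mapsto -2|y_0-g_0|$ in $C$, which also flips the sign of $E$. The discriminant
\[ B^2 - 4C = \frac{4}{\sigma_2^2}\,|y_0-g_0|\bigl(2-|y_0-g_0|\bigr) \]
is strictly positive because $y_0,g_0\in(-1,1)$ with $y_0\neq g_0$, so $\mathcal{H}$ is a nondegenerate Euclidean hyperbola; a rotation/translation of the same shape as \eqref{eq:xf0neq0}--\eqref{eq:overE}, with the usual $f_0=0$ vs.\ $f_0\neq 0$ split, reduces \eqref{p2} to canonical form.

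The main obstacle I anticipate is verifying that $\mathcal{H}\cap\mathbb{B}^2$, rather than some proper subset of it, is the locus, and that it is convex. At this stage the locus is only known to be contained in $\mathcal{H}\cap\{\overline{y}>\sigma_2\}\cap\mathbb{B}^2$, so I must discard the half-plane restriction and show connectedness of one branch. A direct vertex computation (setting $d\overline{y}/dx=0$ in \eqref{p2}) gives two solutions, $\overline{y}=0$ and
\[ \overline{y}=\overline{y}_{\min}=\frac{4E}{B^2-4C}=\frac{2\sigma_2}{2-|y_0-g_0|}>\sigma_2, \]
so one branch of $\mathcal{H}$ starts at $(0,0)$ in $(x,\overline{y})$-coordinates, namely at the unit-circle point $(0,\operatorname{sgn}(y_0-g_0))$, and extends into $\overline{y}<0$, which corresponds to $|y|\geq 1$ and hence lies outside $\mathbb{B}^2$. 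The "parabola branch" is confined to $\overline{y}\geq\overline{y}_{\min}>\sigma_2$, so it is automatically contained in the relevant half-plane; because the other branch contributes no interior point of the disk, $\mathcal{H}\cap\mathbb{B}^2$ coincides with this single branch intersected with the open convex disk, yielding a connected convex arc.
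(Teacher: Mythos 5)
Your proposal is correct and follows the same skeleton as the paper's proof (expand \eqref{eq:tipe2norm} via the half-plane argument inherited from the type~1 case, then classify the conic by its discriminant), but it differs in two substantive places. Where the paper simply asserts that ``only one of the curves of the Euclidean hyperbola intercepts the domain $\mathbb{B}^2$'' as if it followed from the half-plane observation --- which it does not: a priori the second branch could meet $\mathbb{B}^2$ in points that merely fail to be on the locus, and that would falsify the claimed identity of the locus with all of $\mathcal{H}\cap\mathbb{B}^2$ --- you actually prove it. Your computation of the horizontal-tangent ordinates $\overline{y}=0$ and $\overline{y}=4E/(B^2-4C)=2\sigma_2/(2-\vert y_0-g_0\vert)>\sigma_2$ shows that $\mathcal{H}$ lies entirely in $\{\overline{y}\le 0\}\cup\{\overline{y}\ge \overline{y}_{\min}\}$, so the spurious branch corresponds to $\vert y\vert\ge 1$ and misses the disk, while the surviving branch sits automatically in the focus's half-plane; this closes a genuine gap in the paper's argument. (Your discriminant $\tfrac{4}{\sigma_2^2}\vert y_0-g_0\vert(2-\vert y_0-g_0\vert)$ is also the correct value; the paper drops the harmless factor $\sigma_2^{-2}$.) On the other hand, you omit the one step the paper does carry out in detail: the Intermediate Value Theorem argument along the segment from $\mathcal{F}$ to the foot $Q^*\in s$, which shows the locus --- hence $\mathcal{H}\cap\mathbb{B}^2$ --- is non-empty. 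Your closing sentence presupposes both non-emptiness and connectedness of the arc rather than establishing them; adding that short IVT argument would make your proof strictly stronger than the paper's.
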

\begin{proof}
Note that the discriminant $I=B^2-4C$ is given by
\[I=4\vert y_0-g_0\vert(2-\vert y_0-g_0\vert) >0.\]
what shows us that the second-degree equation \eqref{p1} is hyperbolic in the Euclidean geometry (i.e., two concurrent lines, or a hyperbola in $ \mathbb{R}^2 $). As $ E\neq 0$, the two lines option is disregarded.\par
Analogous to what was done in Proposition \ref{prop1}, it is possible to show that the focus $\mathcal{F}$ and the points $P$ of the Funk's parabola of type 2 are in an open subspace generated by the directrix $s:y=y_0$. Therefore, only one of the curves of the Euclidean hyperbola intercepts the domain $ \mathbb{B}^2$. It remains to show that the set $ \mathcal{H} \cap \mathbb{B}^2$ its not empty, in fact, let $ Q*\in s $ be such that $d(F,s)=d(F,Q*)$, and $f:[0,1]\to \mathbb{R}$ the function defined by
\[f(t)=d(P_t,\mathcal{F})-d(P_t,Q*),\]
where $ P_t=\mathcal{F}+ t(Q*-\mathcal{F})\in \mathbb{B}^2$. The function $f$ is continuous and $f(0)=-d(P,Q*)<0, $ $ f(1)=d(Q*,\mathcal{F})>0$. By the Intermediate Value Theorem , we have that there exist $t_0\in (0,1)$ such that $f(t_0)=0.$
\end{proof}
\subsubsection{Examples - Funk's Parabolas type 1 and 2}
\begin{exa}
\textbf{Funk's parabolas of type 1 and 2 passing through the origin}
Let be the parabolas of type 1 and 2 passing through $(0,0)$, whose equations are given by \eqref{p1} and \eqref{p2}.\par
Substituting  $x=y=0$ in \eqref{p1} or \eqref{p2}, we obtain $C = -E$, in both cases, and we have that $C = -E$ if, and only if $\Vert \mathcal{F}\Vert^2 = y_0^2$. This equations says that, the Funk's parabolas of type 1 and 2 pass through the origin if, and only if, the Euclidean distance of $\mathcal{F}$ and $s$ to the origin are the same.
\begin{figure}[H]
\centering
\caption{Funk's parabola of type 1 with $\mathcal{F} = (0.3, -0.4)$ and $y_0 = 0.5$.}
\includegraphics[width=0.5\textwidth]{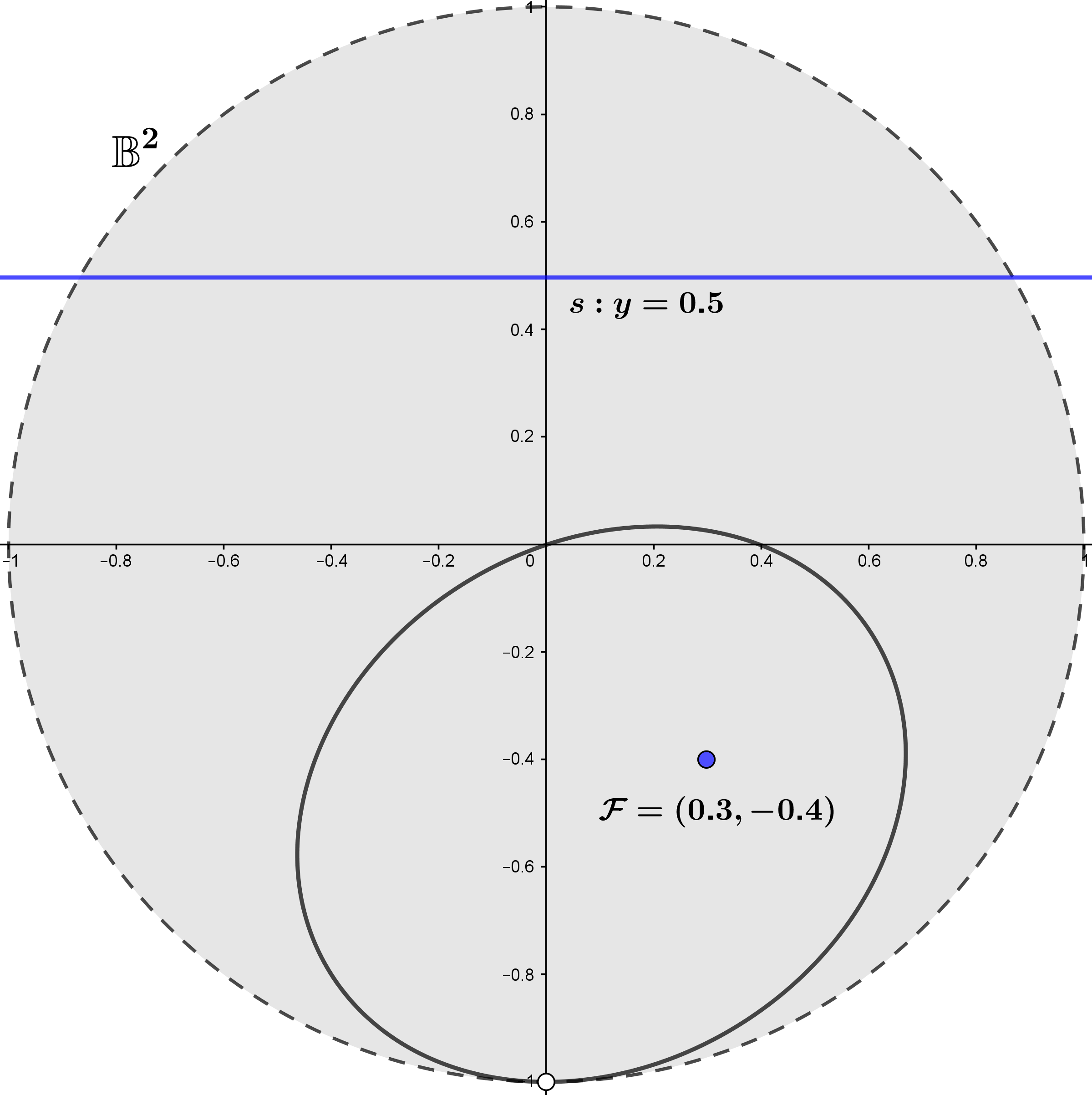}
\end{figure}                        
\begin{figure}[H]
\centering
\caption{Funk's parabola of type 1 with $\mathcal{F} = (0.3, 0.4)$ and $y_0 = - 0.5$.}
\includegraphics[width=0.5\textwidth]{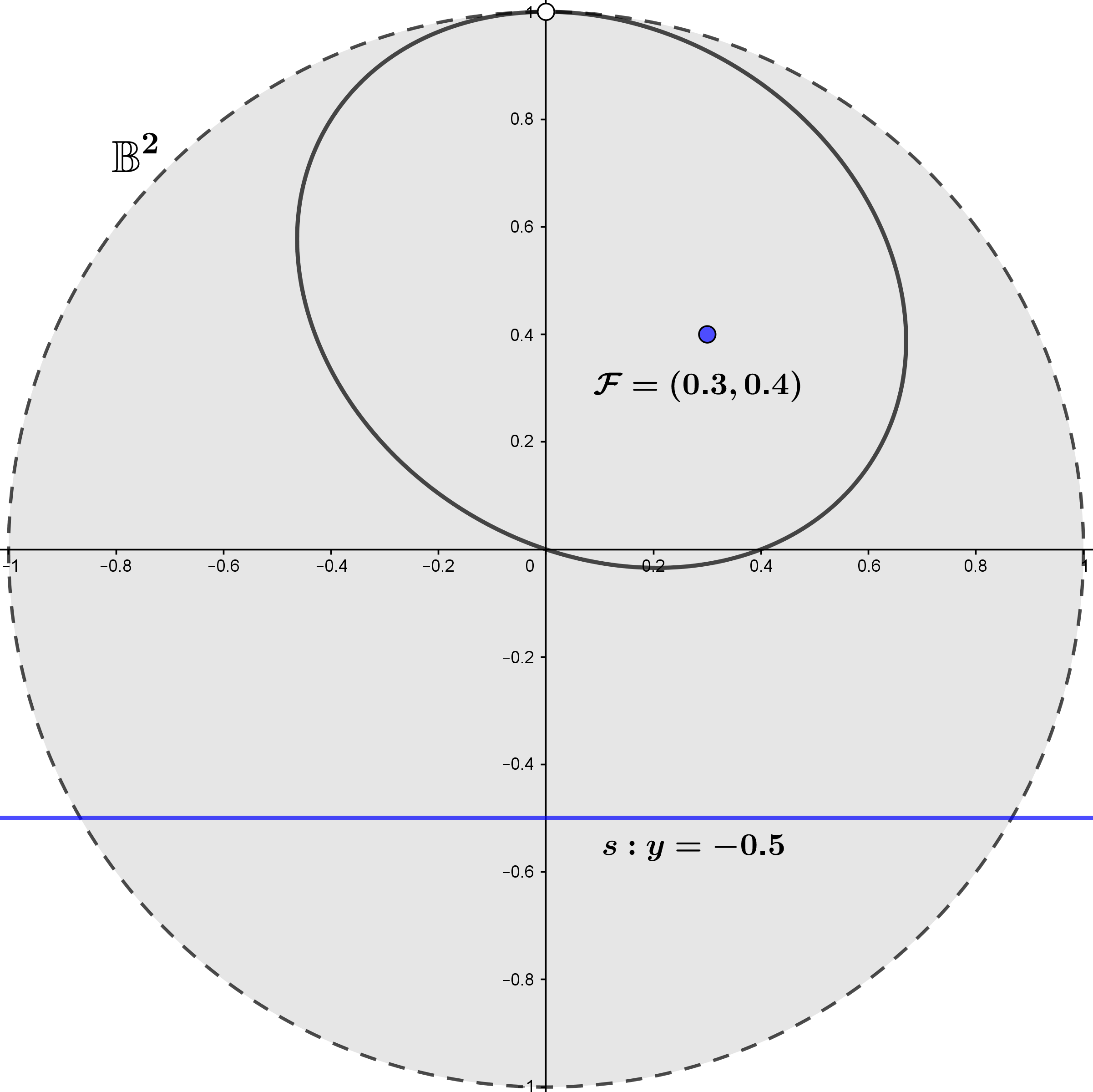}
\end{figure}
\begin{figure}[H]
\centering
\caption{Funk's parabola of type 2 with $\mathcal{F} = (0.3, - 0.4)$ and $y_0 = 0.5$.}
\includegraphics[width=0.5\textwidth]{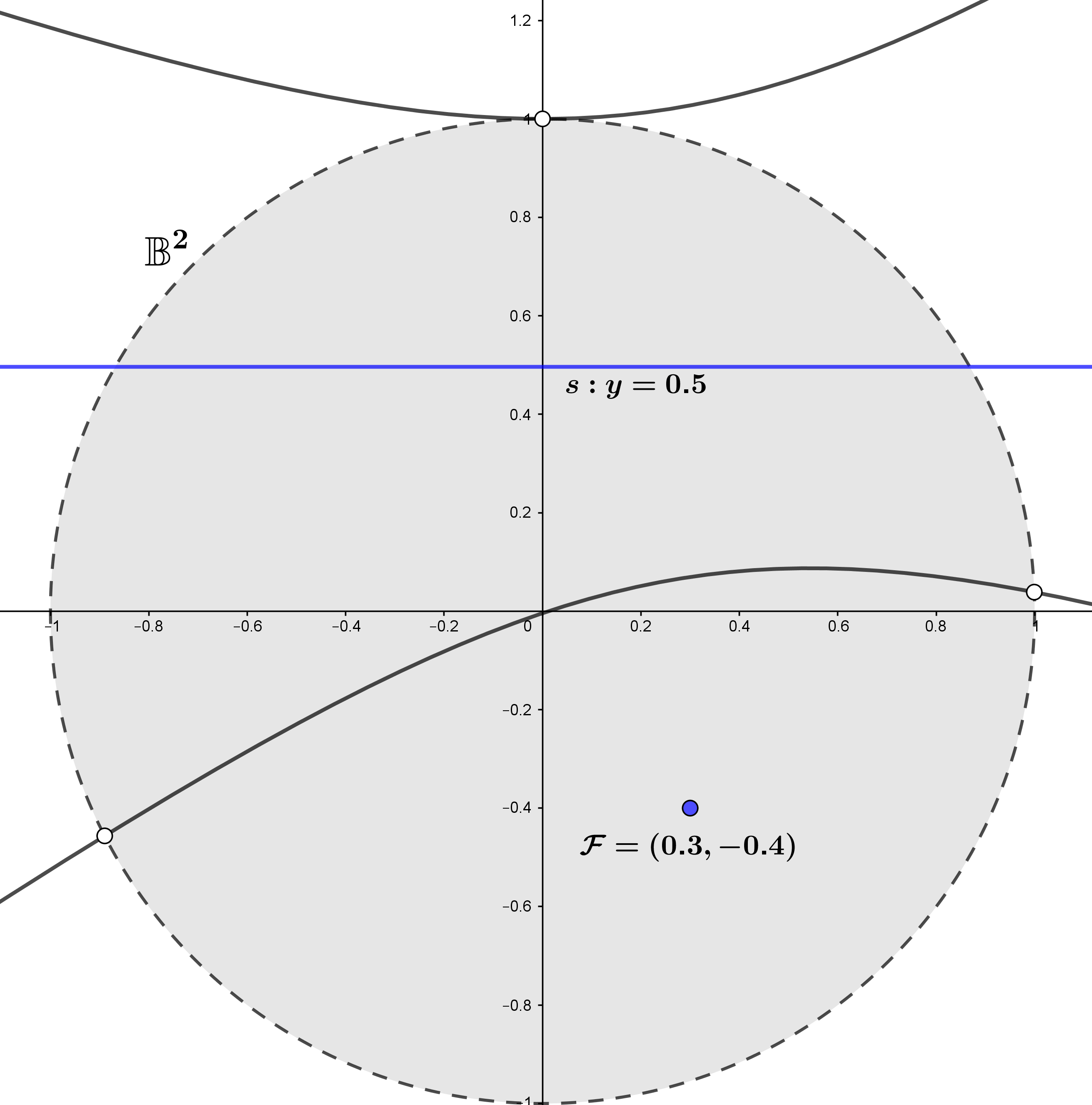}
\end{figure}                        
\begin{figure}[H]
\centering
\caption{Funk's parabola of type 2 with $\mathcal{F} = (0.3, 0.4)$ and $y_0 = - 0.5$.}
\includegraphics[width=0.5\textwidth]{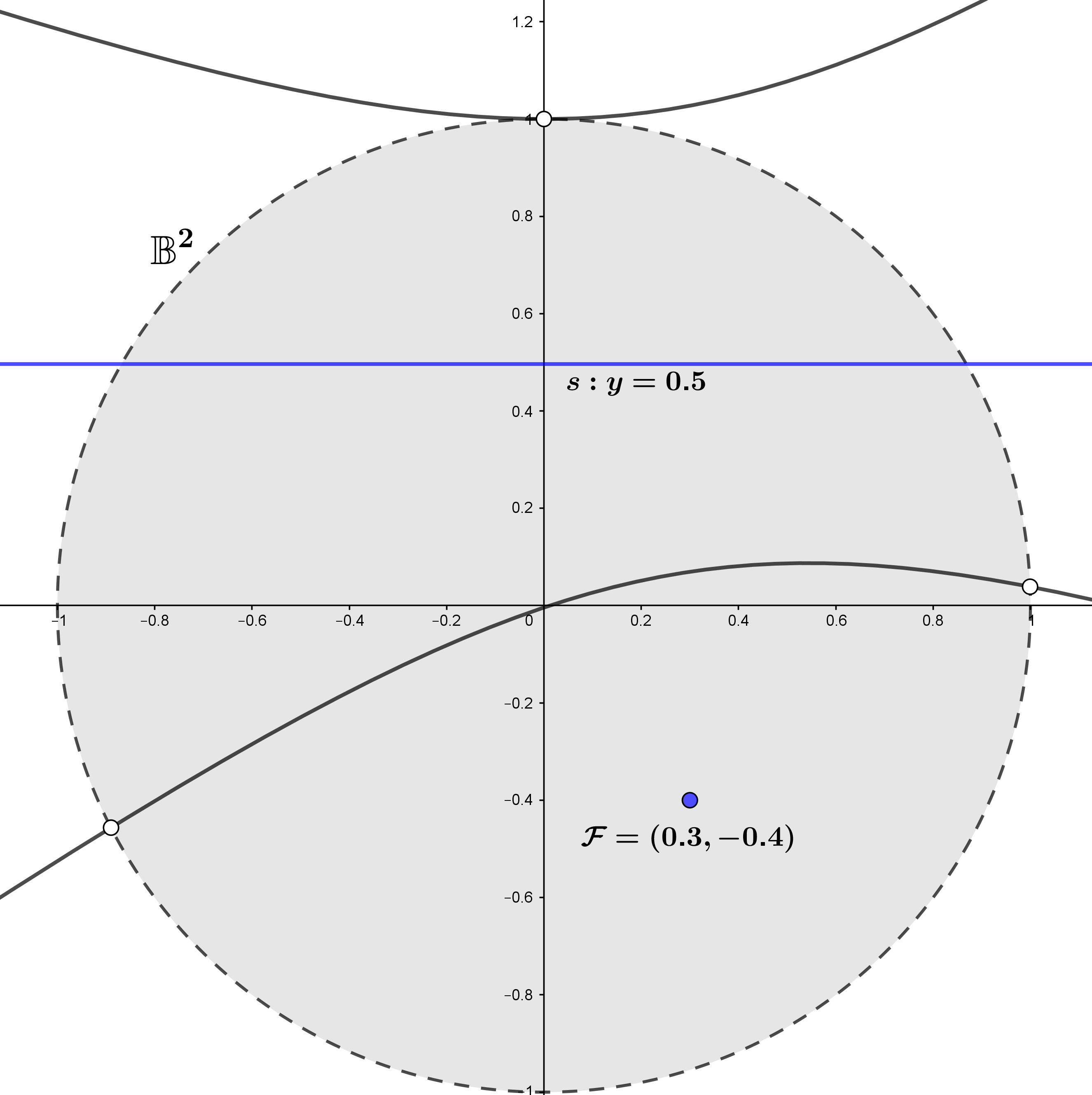}
\end{figure}                     
\end{exa}
\begin{exa}        
\textbf{Funk's parabolas of type 1 and 2 with focus in the origin.}

\end{exa}               



%
%
%

\subsubsection{Funk's Parabola type 3 - Particular Case}


By Theorem \ref{maintheorem} and \eqref{dist_pt_reta_const} we have that the point $P$ satisfies (P3) if, and only if,
\begin{align}\label{3p0}
\left\Vert\frac{\mathcal{F}}{\rho(y,y_0)}-P\right\Vert^{2}=\left\Vert\frac{P}{\rho(y,y_0)}-Q\right\Vert^{2},
\end{align}
where $ Q=(\frac{x}{\rho(y,y_0)},y_0)\in s $ and $ \rho $ is given by \eqref{rxy}.
Using the properties of absolute value and inner product, we obtain the following equation            
\eqref{3p0}, in the coordinates $\mathcal{F}=(f_0,g_0),\, P=(x,\,y)$ and $ s: y=y_0, $ is given by:
\begin{align}\label{3p1}
\overline{y}^4 + x^2\overline{y}^2-2\overline{y}^3+A\overline{y}^2 + Bx\overline{y} + C\overline{y}+D=0,
\end{align}
where  \begin{align*}
\overline{y}=&1-\operatorname{sgn}(y_0-y)\cdot y,\\
A=&2\sigma_3g_0\operatorname{sgn}(y_0-y),&
B=&-2\sigma_3 f_0,\\
C=&2\sigma_3(1-\operatorname{sgn}(y_0-y)\cdot g_0)&
D=&\sigma_3^2(f_0^2+g_0^2-1),
\end{align*}
and $ \sigma_3=1-\operatorname{sgn}(y_0-y)\cdot y_0. $

\subsubsection{Funk's Parabola type 4 - Particular Case} 
By Theorem \ref{maintheorem} and \eqref{dist_pt_reta_const} we have that the point $P$ satisfies (P4) if, and only if,
\begin{align}\label{4p0}
\left\Vert\frac{\mathcal{F}}{\rho^{-1}(y_0,y)}-P\right\Vert^{2}=\left\Vert\frac{P}{\rho^{-1}(y_0,y)}-Q\right\Vert^{2},
\end{align}
where $ Q=(\frac{x}{\rho^{-1}(y_0,y)},y_0)\in s $ and $ \rho $ is given by \eqref{rxy}.
Using the properties of absolute value and inner product, we obtain the equation \eqref{4p0}, in the coordinates $ \mathcal{F}=(f_0,g_0),\, P=(x,\,y) $ and $ s: y=y_0, $ is given by:
\begin{align}\label{4p1}
\overline{y}^4 + x^2\overline{y}^2-2\overline{y}^3+A\overline{y}^2 + Bx\overline{y} + C\overline{y}+D=0,
\end{align}
where \begin{align*}
\overline{y}=&1+\operatorname{sgn}(y_0-y)\cdot y,\\
A=&-2\sigma_4g_0\operatorname{sgn}(y_0-y),&
B=&-2\sigma_4 f_0,\\
C=&2\sigma_4(1+\operatorname{sgn}(y_0-y)\cdot g_0),&
D=&\sigma_4^2(f_0^2+g_0^2-1),
\end{align*}
and $ \sigma_4=1+\operatorname{sgn}(y_0-y)\cdot y_0. $
\begin{remark}\label{obs34equiv}
Note that the equation for parabolas of type 3 when $y < y_0$ (resp. $y>y_0$) is equal to the equation for parabolas of type 4 when $ y > y_0$ (resp. $ y<y_0 $).  	
\end{remark}
%

\subsubsection{Further Analysis of Funk's Parabolas type 3 and 4}

Due to Remark \ref{obs34equiv}, the graphs of parabolas of type 3 and 4 are equal, then we will study only the equation  \eqref{3p1}. 
As can be noticed, the equation \eqref{3p1} is a quartic equation. Now, we will show that the right side of \eqref{3p1} is irreducible. 
In order to do this, lets suppose that the equation \eqref{3p1} can be written as
\begin{align*}
\left(\overline{y}^2+Gx\overline{y} + Hx^2 + Ix+ J\overline{y} + K\right)\times\left(\overline{y}^2 + Lx\overline{y} + Mx^2+Nx + P\overline{y} + Q\right) = 0.
\end{align*}
Multiplying and grouping terms with $x$ and $y$ above we have            
\begin{align*}
0=&\overline{y}^4+(L+G)\overline{y}^3x + (M+H+GL)\overline{y}^2x^2+(MG+HL)\overline{y}x^3 + HMx^4+\\
&+(N+I+GP+JL)\overline{y}^2x + (P+J)\overline{y}^3 + (NG + JM+HP+IL)\overline{y}x^2 + \\
&+(HN+IM)x^3 + (Q+K+JP)\overline{y}^2 + (GQ+IP+LK+NJ)x\overline{y} + \\
&+(QH+NI+MK)x^2 + (JQ+PK)\overline{y} + (KN+IQ)x + KQ.      
\end{align*}
By comparing the equations above with \eqref{3p1} we obtain 14 equations, of which we are only interested in
\begin{align}
HM=0,\quad L+G=0 \quad M+H=1,\quad MG+HL=0,\quad HN+IM=0\label{eq001}\\
NG+JM+HP+IL=0,\quad QH+NI+MK=0 \quad JQ+PK=C.\label{eq002}
\end{align}
From  \eqref{eq001} and \eqref{eq002}, we have  $C=2\sigma_3(1-\operatorname{sgn}(y_0-y)\cdot g_0)=0, $ which is a contradiction.\par
On the other side, suppose that the equation \eqref{3p1} can be written as  
\begin{align*}
\left(\overline{y}^3+G\overline{y}^2x+H\overline{y}x^2 + Ix^3+J\overline{y}^2+K\overline{y}x + Lx^2+My+Nx+P\right)(\overline{y} + Qx+R) = 0,
\end{align*}            
%
Doing the multiplications on the left side, and comparing with \eqref{3p1}, we obtain 14 equations:                 
\begin{align}
IQ=0, \quad Q+G=0, \quad HQ+I=0, \quad GR+JQ+K=0 , \quad QG+H=1\label{eq003}\\R+J=-2,
\quad HR+KQ+L=0\quad LR+NQ=0, \label{eq004}\\IR+LQ=0, \quad JR+M=A,\quad                 	KR+MQ+N=B,\quad  \label{eq005}\\
\quad MR+P=C, \quad NR+PQ=0, \quad PR=D
\end{align}
Three cases can be considered: when $I = Q = 0,$ or $I=0,\;Q\neq0$ or $I\neq0,\;Q=0$.\medskip\\
\noindent\textbf{Case 1: $I=Q=0$.}\par
From \eqref{eq003} we have $ G=0, K=0, H=1, $ then, from \eqref{eq004} we have $ R+L=RL=0, $ from the last equation in \eqref{eq005} we conclude that $ D=0, $ which is absurd.\smallskip\\
\noindent \textbf{Case 2: $I=0$ and $Q\neq0$.}\par
From \eqref{eq003} we have $ H=0 $ and $ Q+G=0, QG=1, $ then there no exist $ Q $ and $ G $ real numbers, satisfying these conditions.\smallskip\\
%
%
%
\textbf{Caso 3: $I\neq0$ e $Q=0$.}\par
From the third equation in \eqref{eq003} we have $I=0,$ which is absurd.\medskip\\
Therefore, we conclude that it is impossible to write the right side of equation \eqref{3p1} as a product of lower degree polynomials.
\subsubsection{Examples - Funk's Parabolas type 3 and 4}
\begin{exa}\label{par3ex1}\hfill 
\begin{figure}[H]
\centering
\caption{Funk's Parabolas type 3 with $\mathcal{F} = (0, 0)$ and $y_0 = 0.3$.}
\includegraphics[width=0.5\textwidth]{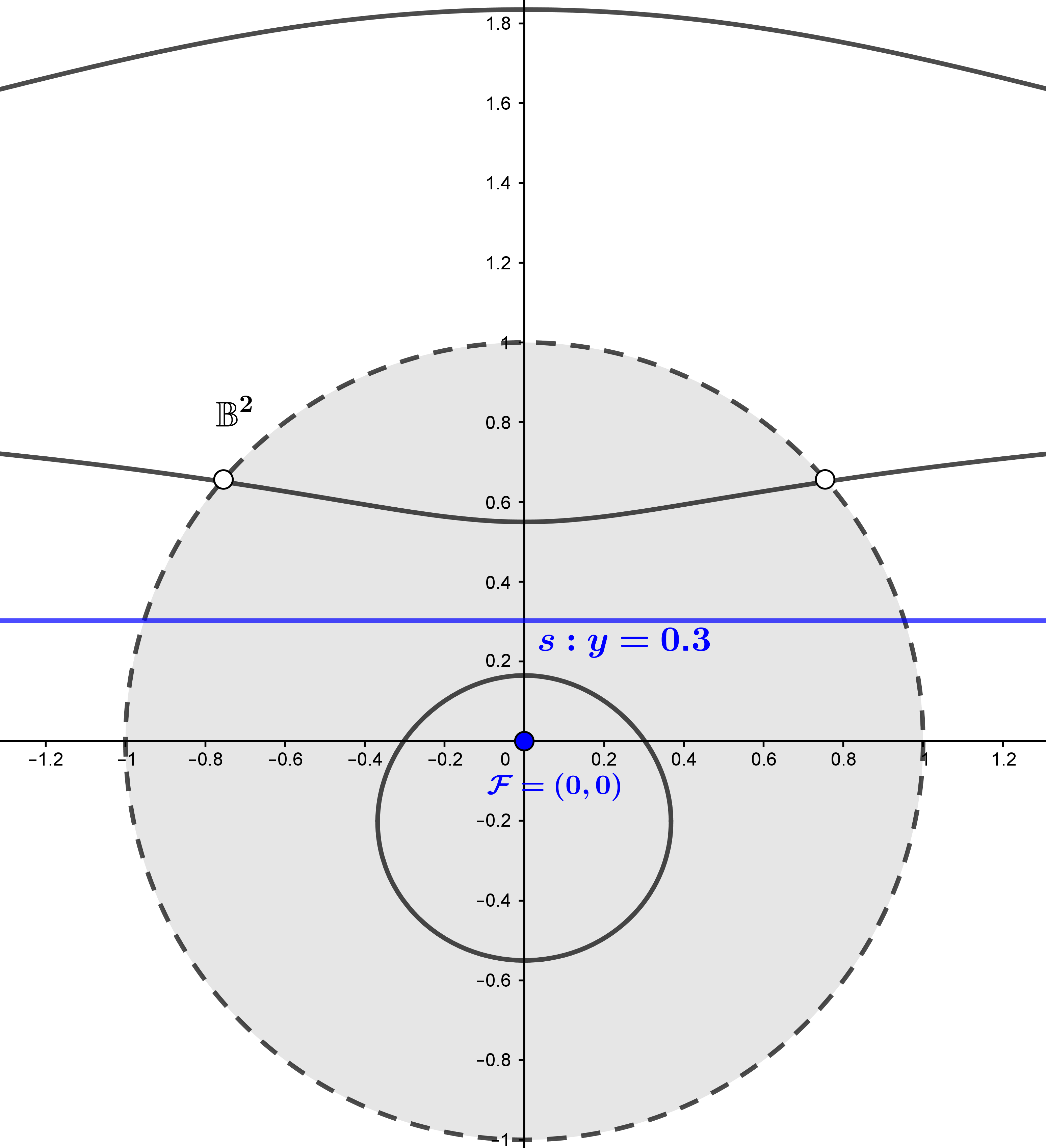}
\end{figure}                
\begin{figure}[H]
\centering
\caption{Funk's Parabolas type 3 with $\mathcal{F} = (0.6, 0)$ and $y_0 =  0.3$.}
\includegraphics[width=0.5\textwidth]{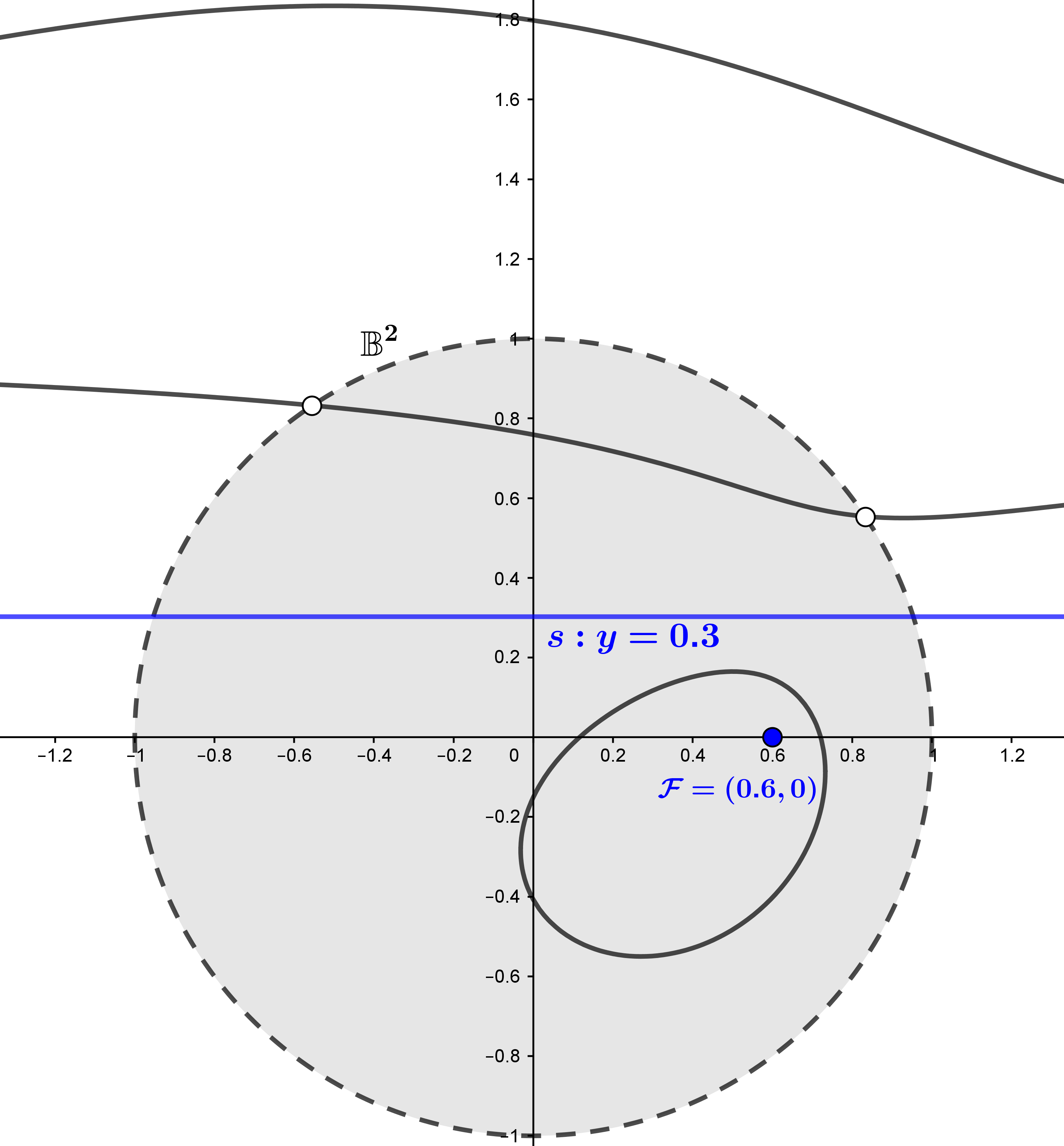}
\end{figure}
\begin{figure}[H]
\centering
\caption{Funk's Parabolas type 3 with $\mathcal{F} = (0.7, 0.3)$ and $y_0 =  0.3$.}
\includegraphics[width=0.5\textwidth]{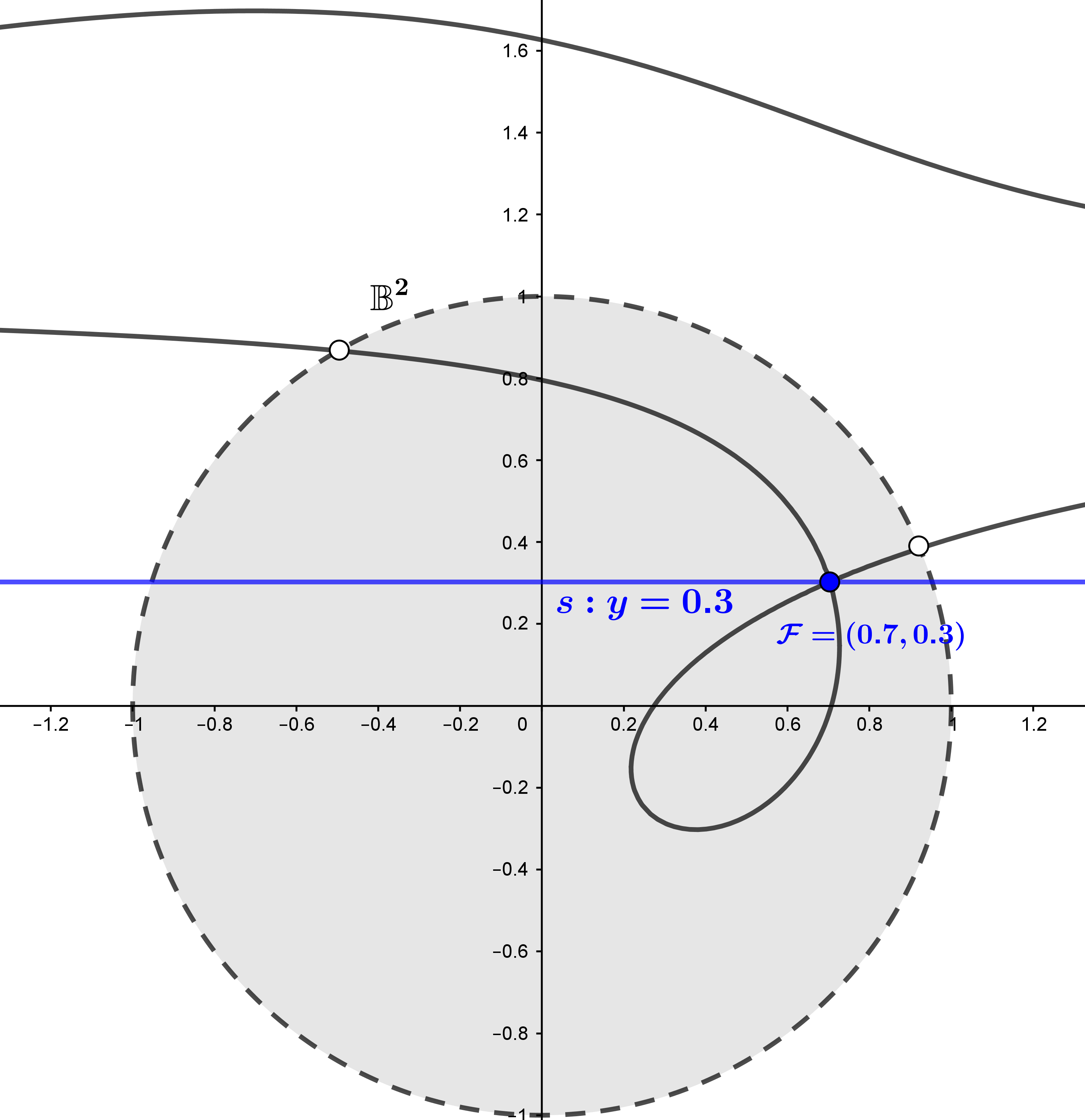}
\end{figure}                    
\begin{figure}[H]
\centering
\caption{Funk's Parabolas type 3 with $\mathcal{F} = (0, -0.4)$ and $y_0 =  -0.8$.}
\includegraphics[width=0.5\textwidth]{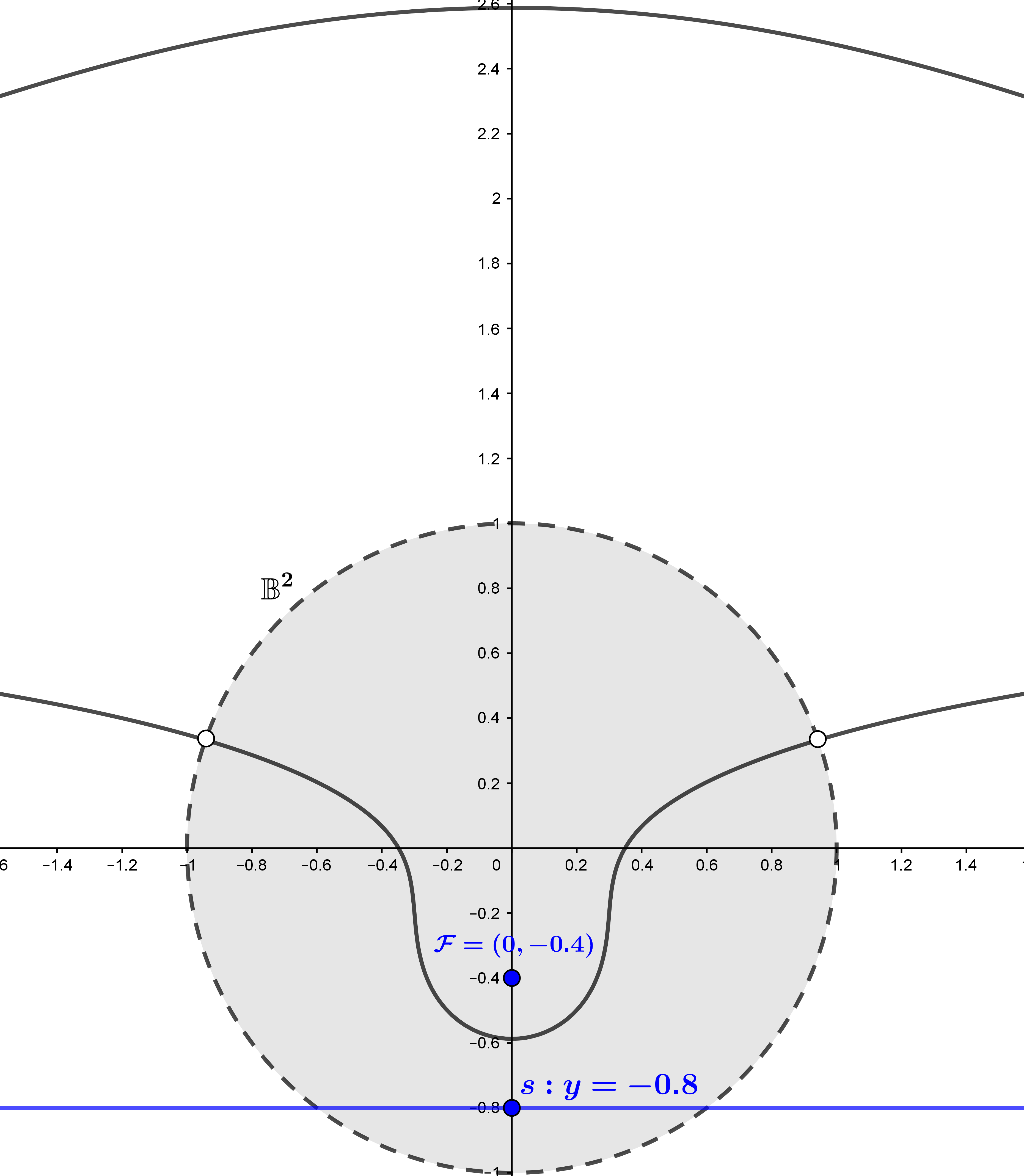}
\end{figure}                    
\end{exa}
%
%
%
%
%

\subsection*{Acknowledgements} We want to thank to Jonny Ardila for its help in the writing of this work.


\end{document}